\documentclass{ws-join}

\usepackage{amsmath,amssymb}
\usepackage[utf8x]{inputenc}
\usepackage[T1]{fontenc}
\usepackage{lmodern}
\usepackage{microtype}
\usepackage{bbding}
\usepackage{color}
\usepackage{diagbox}
\usepackage{tikz}
\usetikzlibrary{arrows}
\usetikzlibrary{bending}

\newcommand{\F}{\mathbb{F}}

\newcommand{\diam}{{\rm diam} }

\newcommand{\dist}{{\rm dist} }
\newcommand{\fq}{\mathbb{F}_q}

\newcommand{\fqs}{\mathbb{F}_q^{\ast}}


\def\@begintheorem#1#2#3{\par\addvspace{8pt plus3pt minus2pt}%
             \noindent{\csname#1headfont\endcsname#1\hskip6pt\ignorespaces#3 #2.}%
              \csname#1font\endcsname\hskip.5em\ignorespaces}
\def\@endtheorem{\par\addvspace{8pt plus3pt minus2pt}\@endparenv}

\newtheorem{conjecture}{Conjecture}[section]

\newcommand{\nb}[1]{\gcd(#1,q-1)}

\title{A note on the Isomorphism Problem for Monomial Digraphs}

\author{
    ALEKSANDR KODESS\\
    Department of Mathematics\\
    University of Rhode Island\\
    Kingston, RI 02881, USA\\
    \texttt{kodess@uri.edu}
}
\author{
    FELIX LAZEBNIK\\
    Department of Mathematical Sciences\\
    University of Delaware\\
    Newark, DE 19716, USA\\
    \texttt{fellaz@udel.edu}
}

\begin{document}
\maketitle
\begin{center}
{\it Dedicated to the memory of Mirka Miller (1949 -- 2016)}
\bigskip
\end{center}
\begin{abstract}
Let $p$ be a prime
$e$ be a positive integer, $q = p^e$,
and let
$\F_q$ denote the finite field of $q$ elements.
Let $m,n$, $1\le m,n\le q-1$,  be integers.
The monomial digraph $D= D(q;m,n)$ is defined as follows:
the vertex set of $D$
is $\F_q^2$,
 and  $((x_1,x_2),(y_1,y_2))$ is an arc in $D$ if
$
x_2 + y_2 = x_1^m y_1^n
$.
In this note we study the question of isomorphism of monomial digraphs $D(q;m_1,n_1)$ and $D(q;m_2,n_2)$. 
Several necessary conditions and several 
sufficient conditions for the isomorphism are 
found. 
We  conjecture that one simple sufficient condition is also a necessary one.
\end{abstract}

\section{Introduction}
For all  terms related to digraphs which are not defined below, see Bang-Jensen and Gutin\cite{Bang_Jensen_Gutin}.
In this paper,
by a {\it directed graph} (or simply {\it digraph)}
$D$ we mean a pair $(V,A)$, where
$V=V(D)$ is the set of vertices and $A=A(D)\subseteq V\times V$ is the set of arcs.
For an arc $(u,v)$, the first vertex $u$ is called its {\it tail} and the second
vertex $v$ is called its {\it head}; we also denote such an arc by $u\to v$.
If $(u,v)$ is an arc,  we call $v$  an {\it out-neighbor}  of $u$, and $u$ an {\it in-neighbor} of $v$.
The number of out-neighbors of  $u$ is called the {\it out-degree} of $u$, and the number of in-neighbors of $u$ --- the {\it in-degree} of $u$.
For an integer $k\ge 2$,  a {\it walk} $W$ {\it from} $x_1$ {\it to} $x_k$ in $D$ is an alternating sequence
$W = x_1 a_1 x_2 a_2 x_3\dots x_{k-1}a_{k-1}x_k$ of vertices $x_i\in V$ and arcs $a_j\in A$
such that the tail of $a_i$ is $x_i$ and the head of $a_i$ is $x_{i+1}$ for every
$i$, $1\le i\le k-1$.
Whenever the labels of the arcs of a walk are not important, we use the notation
$x_1\to x_2 \to \dotsb \to x_k$ for the walk, and say that we have an $x_1x_k$-walk.
In a digraph $D$, a vertex $y$ is {\it reachable} from a vertex $x$ if there exists  a walk from $x$ to $y$ in $D$. In
particular, a vertex is reachable from itself. A digraph $D$ is {\it strongly connected}
(or, just {\it strong}) if, for every pair $x,y$ of distinct vertices in $D$,
$y$ is reachable from $x$ and $x$ is reachable from $y$.
A {\it strong component} of a digraph $D$ is a maximal induced subdigraph of $D$ that is strong.
If $x$ and $y$ are vertices of a digraph $D$, then the
{\it distance from x to y} in $D$, denoted $\dist(x,y)$, is the minimum length of
an $xy$-walk, if $y$ is reachable from $x$, and otherwise $\dist(x,y) = \infty$.
The {\it distance from a set $X$ to a set $Y$} of vertices in $D$ is
\[
\dist(X,Y) = \max
\{
\dist(x,y) \colon x\in X,y\in Y
\}.
\]
The {\it diameter}  of $D$ is defined as $\dist(V,V)$,  and it is denoted by $\diam(D)$.

Let $p$ be a prime, $e$ a positive integer, and $q = p^e$. Let
$\fq$ denote the finite field of $q$ elements, and  $\fq^*=\fq\setminus\{0\}$.

Let $\fq^2$
denote the Cartesian product $\fq \times \fq$, and let
 $f\colon\fq^2\to\fq$ be an arbitrary function. We define a  digraph $D = D(q;f)$  as follows:
 $V(D)=\fq^{2}$, and
there is an arc from a vertex ${\bf x} = (x_1,x_2)$ to a vertex
${\bf y} = (y_1,y_{2})$ if and only if
\[
x_2 + y_2 = f(x_1,y_1).
\]

If $({\bf x},{\bf y})$ is an arc in $D$, then ${\bf y}$ is uniquely determined by  ${\bf x}$ and $y_1$, and ${\bf x}$ is uniquely determined by  ${\bf y}$ and $x_1$.
Hence,  each vertex of $D$ has both its in-degree and out-degree equal to $q$.

By Lagrange's interpolation, 
$f$ can be uniquely represented by
a bivariate polynomial of degree at most $q-1$ in each of the variables.  If  ${f}(x,y) =  x^m y^n$, where $m,n$ are integers, $1\le m,n\le q-1$,  we call  $D$ a {\it monomial} digraph, and denote it
by  $D(q;m,n)$. Digraph $D(3; 1,2)$ is depicted in Fig.\,1.  As for every $a\in \fq$,  $a^q=a$, we will assume in the notation $D(q;m,n)$ that $1\le m,n\le q-1$.   It is clear,  that ${\bf x}\to {\bf y}$ in $D(q;m,n)$ if and only if ${\bf y}\to {\bf x}$ in $D(q;n,m)$. Hence, one digraph is obtained from the other by reversing the direction of every arc. In general, these digraphs are not isomorphic, but if one of them is strong so is the other  and their diameters are equal. Also, if one of them contains a path or cycle, then the other contains a path or a  cycle  of the same length.

\begin{figure}
\begin{center}
\begin{tikzpicture}
\tikzset{vertex/.style = {shape=circle,draw,inner sep=2pt,minimum size=.5em, scale = 1.0},font=\sffamily\scriptsize\bfseries}
\tikzset{edge/.style = {->,> = triangle 45}}

\node[vertex] (a) at  (2,0) {$(1,0)$};
\node[vertex] (b) at  (5,0) {$(0,0)$};
\node[vertex] (c) at  (8,0) {$(2,0)$};

\node[vertex] (d) at  (0,-2) {$(2,1)$};
\node[vertex] (e) at  (2,-2) {$(1,1)$};
\node[vertex] (f) at  (8,-2) {$(2,2)$};
\node[vertex] (g) at  (10,-2) {$(1,2)$};

\node[vertex] (h) at  (2,-4) {$(0,2)$};
\node[vertex] (i) at  (8,-4) {$(0,1)$};

\draw[edge] (a) to (b);
\draw[edge] (a) to (d);
\draw[edge] (a) to (e);

\draw[edge] (b) to (a);
\draw[edge] (b) to (c);

\draw[edge] (c) to (b);
\draw[edge] (c) to (f);
\draw[edge] (c) to (g);

\draw[edge] (d) to (e);
\draw[edge] (d) to (h);

\draw[edge] (e) to (a);
\draw[edge] (e) to (c);
\draw[edge] (e) to (h);

\draw[edge] (f) to (a);
\draw[edge] (f) to (c);
\draw[edge] (f) to (i);

\draw[edge] (g) to (f);
\draw[edge] (g) to (i);

\draw[edge] (h) to (d);
\draw[edge] (h) to (e);
\draw[edge] (h) to (i);

\draw[edge] (i) to (f);
\draw[edge] (i) to (g);
\draw[edge] (i) to (h);

\path
    (b) edge [->,>={triangle 45[flex,sep=0pt]},loop,out=240,in=270,looseness=7.5] node {} (b);
\path
    (d) edge [->,>={triangle 45[flex,sep=0pt]},loop,out=160,in=130,looseness=8] node {} (d);
\path
    (g) edge [->,>={triangle 45[flex,sep=0pt]},loop,out=20,in=50,looseness=8] node {} (g);
\end{tikzpicture}
\caption{The digraph $D(3;1,2)$: $x_2+y_2 = x_1y_1^2$.}
\end{center}
\end{figure}
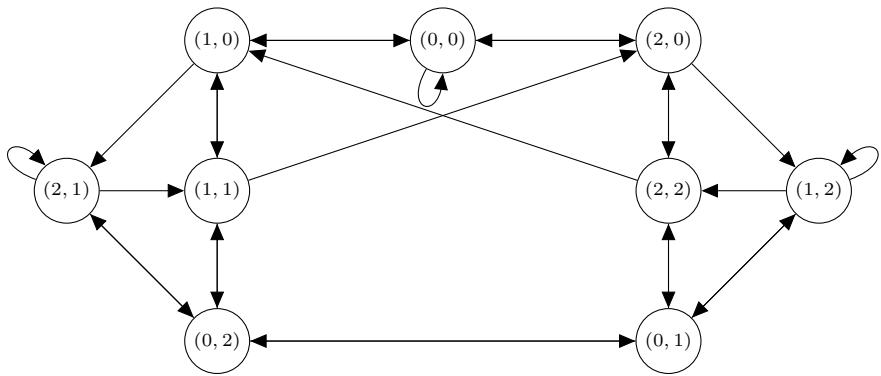

The digraphs $D(q; {f})$
and $D(q;m,n)$ are directed analogues
of 
some algebraically defined graphs,  which have been studied extensively
and have many applications: see
Lazebnik and Woldar\cite{LazWol01}, and  a recent survey by Lazebnik, Sun, and Wang\cite{LazSunWan16}.

The study of digraphs $D(q; f)$ started with the questions of connectivity and diameter. 
The questions of strong connectivity of digraphs $D(q;{f})$ and $D(q; m,n)$ and descriptions of their components were completely answered by Kodess and Lazebnik\cite{Kod_Laz_15}. The problem of determining the diameter of a component of $D(q;{f})$ for an arbitrary prime power $q$ and an arbitrary $f$ turned out to be
rather difficult. A number of results concerning  some instances of this problem for strong monomial digraphs were obtained by Kodess, Lazebnik, Smith, and Sporre\cite{KLSS16}.

As the order of $D(q; m,n)$ is $q^2$,  it is clear that digraphs $D_1=D(q_1; m_1,n_1)$  and $D_2= D(q_2; m_2,n_2)$ are isomorphic (denoted as $D_1\cong D_2$)  only if $q_1=q_2$.  Hence, the isomorphism problem for monomial digraphs can be stated as follows:  find  necessary and sufficient conditions on $q,m_1,n_1,m_2,n_2 $ such that $D_1\cong D_2$.   Though we are still unable to solve the problem,  all our partial results support the following conjecture.

\begin{conjecture}
\label{main_conj} {\rm [Kodess\cite{Kod14}]}
{\it 
Let $q$ be a prime power, and let $m_1,n_1,m_2,n_2$ be be integers from $\{1,2,\ldots, ,q-1\}$.
Then  $D(q; m_1,n_1)\cong D(q; m_2,n_2)$  if and only if there exists an integer $k$,
coprime with $q-1$, such that
\[
m_2 \equiv k m_1 \mod (q-1),
\]
\[
n_2 \equiv k n_1 \mod (q-1).
\]
}
\end{conjecture}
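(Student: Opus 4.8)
The plan is to prove the two implications separately. The \emph{sufficiency} (``if'') direction is elementary and I would dispatch it with an explicit isomorphism; the \emph{necessity} (``only if'') direction is the real content and, I expect, the genuine obstacle --- indeed the reason the statement is posed as a conjecture.

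For sufficiency, suppose such a $k$ exists and let $k'$ be a multiplicative inverse of $k$ modulo $q-1$; since $\gcd(k,q-1)=1$ such a $k'$ exists and also satisfies $\gcd(k',q-1)=1$. Because $\fqs$ is cyclic of order $q-1$, the power map $t\mapsto t^{k'}$ permutes $\fqs$ and fixes $0$, hence is a bijection of $\fq$. I would then define $\phi\colon\fq^{2}\to\fq^{2}$ by $\phi(x_1,x_2)=(x_1^{k'},x_2)$, a bijection of the common vertex set, and verify that it carries the arcs of $D(q;m_1,n_1)$ exactly onto those of $D(q;m_2,n_2)$. The verification rests on the congruences $k'm_2\equiv k'km_1\equiv m_1$ and $k'n_2\equiv n_1 \pmod{q-1}$, which give $x_1^{k'm_2}=x_1^{m_1}$ and $y_1^{k'n_2}=y_1^{n_1}$ for all $x_1,y_1\in\fq$ (both sides vanish when the base is $0$). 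Thus $\phi(x_1,x_2)\to\phi(y_1,y_2)$ in $D(q;m_2,n_2)$, i.e. $x_2+y_2=(x_1^{k'})^{m_2}(y_1^{k'})^{n_2}$, holds if and only if $x_2+y_2=x_1^{m_1}y_1^{n_1}$, i.e. $(x_1,x_2)\to(y_1,y_2)$ in $D(q;m_1,n_1)$, which is exactly the isomorphism.

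For necessity I would search for isomorphism invariants of $D(q;m,n)$ that depend on $(m,n)$ only through the equivalence $(m,n)\sim(km,kn)$. A first, concrete invariant is the number of digons: $\mathbf{x}\to\mathbf{y}\to\mathbf{x}$ with $\mathbf{x}\neq\mathbf{y}$ forces $x_1^{m}y_1^{n}=x_1^{n}y_1^{m}$, which for nonzero coordinates means $(x_1/y_1)^{m-n}=1$, so their number depends on $(m,n)$ only through $\nb{m-n}$; an isomorphism therefore forces $\nb{m_1-n_1}=\nb{m_2-n_2}$, consistent with the conjecture since $\gcd(k,q-1)=1$ gives $\nb{k(m_1-n_1)}=\nb{m_1-n_1}$. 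In the same spirit I would read $\nb{m+n}$ off the automorphism group: the maps $(x_1,x_2)\mapsto(\alpha x_1,\alpha^{m+n}x_2)$, $\alpha\in\fqs$, are automorphisms, and exactly those with $\alpha^{m+n}=1$ fix the second coordinate, a set of size $\nb{m+n}$. I would also exploit the spectrum: writing $f(x_1,x_2)=\psi(x_2)g(x_1)$ for an additive character $\psi$ of $\fq$ shows that $A^{2}$ leaves each $\psi$-block invariant, acting there through the Weil-type kernel $K(x_1,v_1)=\sum_{y\in\fq}\psi(y^{m}v_1^{n}-x_1^{m}y^{n})$, so the closed-walk counts $\mathrm{tr}(A^{\ell})$ become explicit character sums in $m$ and $n$. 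Finally I would feed in the component and diameter descriptions of \cite{Kod_Laz_15,KLSS16} as further constraints.

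The hard part will be upgrading any finite collection of such necessary conditions into the full proportionality statement. Each invariant above sees only a \emph{symmetric shadow} of $(m,n)$ --- $\nb{m-n}$, $\nb{m+n}$, or a character sum invariant under swapping the roles of $m$ and $n$ --- and none of these obviously separates the individual contributions of $m$ and $n$ or rules out pairs that are not common scalar multiples modulo $q-1$. The decisive difficulty is the potential existence of non-isomorphic \emph{cospectral} digraphs in this family, which would defeat any purely spectral argument, compounded by the delicate number theory when $q-1$ is composite, where it is coprimality to $q-1$ rather than to a prime that must be recovered. My expectation is that a complete proof of necessity will require a genuinely new combinatorial-algebraic invariant that recovers the \emph{ratio} $m:n$ modulo $q-1$ directly, rather than an accumulation of the symmetric invariants sketched above.
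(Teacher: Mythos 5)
Your assessment of the statement's status is exactly right: this is an open conjecture, and the paper itself proves only the sufficiency direction (Theorem~\ref{conj_suff}), supplementing necessity with computer verification for small $q$. Your sufficiency proof is correct and is essentially the paper's: the paper uses the map $(x,y)\mapsto(x^k,y)$ from $D(q;m_2,n_2)$ to $D(q;m_1,n_1)$, while you invert $k$ modulo $q-1$ and go the other way; these are the same argument. Your catalogue of candidate necessary conditions also tracks the paper's Theorem~\ref{thm:bars}: your digon count is exactly the paper's 2-cycle count, yielding $\nb{m_1-n_1}=\nb{m_2-n_2}$, and the paper likewise proves $\nb{m_1+n_1}=\nb{m_2+n_2}$ and, separately, $\nb{m_1}=\nb{m_2}$ and $\nb{n_1}=\nb{n_2}$. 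One caveat on your sketch: your route to the $\nb{m+n}$ invariant via the subgroup of automorphisms $(x_1,x_2)\mapsto(\alpha x_1,\alpha^{m+n}x_2)$ fixing the second coordinate is not sound as stated, because that subgroup is not canonical under an abstract digraph isomorphism, so its order is not obviously an invariant. The paper instead counts loops (the looped vertices are $(x,\tfrac12 x^{m+n})$, giving $|A_{m+n}|$ distinct nonzero second coordinates) and crucially relies on Theorem~\ref{thm:f_and_g} --- the structure theorem forcing any isomorphism to have second coordinate of the form $g(Y)$, an odd permutation polynomial with $g(0)=0$ --- both to convert the loop count into the gcd equality and to split the multiset condition of Proposition~\ref{prop1} into the two separate equalities of part~(i); this structure theorem is the paper's main technical contribution and is absent from your sketch. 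Finally, your closing diagnosis of the obstruction is confirmed by the paper's Remark: $D(17;1,4)$ and $D(17;1,12)$ agree on all of $\nb{m}$, $\nb{n}$, $\nb{m+n}$, $\nb{m-n}$ yet are non-isomorphic, so necessity indeed requires an invariant recovering the ratio $m:n$ modulo $q-1$, exactly as you predict; neither you nor the paper has such an invariant, and the necessity direction remains open.
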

The sufficiency part of the conjecture is easy to demonstrate,  and we do it in  Section 3 (Theorem \ref{conj_suff}). We verified the necessity of these conditions with a computer for all prime powers $q$, $2\le q\le 97$. 
In the case $m_1 = m_2 = 1$ (hence,  $n_1 = n_2$),   the necessity of the conditions 
 was verified for all odd prime powers $q$, $3\le q\le 509$.

Our interest in the isomorphism problem for monomial digraphs
$D(q; m,n)$ and Conjecture \ref{main_conj} is two-fold. 
First, due to  the existence of  a simple isomorphism criterion   for  similarly constructed bipartite graphs $G(q; m,n)$ (see Theorem \ref{DLV} below) defined as follows.  Each partition of the vertex set of $G(q; m,n)$, which are denoted by $P$
and $L$, is a copy of $\fq^2$,    and two vertices
$(p_1, p_2)\in P$ and $(l_1, l_2)\in L$ are
adjacent if and only if $p_2 + l_2 = p_1^m l_1^n$. 
Secondly, due to applications of graphs $G(q; m,n)$ and their 
generalizations 
to a number of problems in extremal graph theory\cite{LazSunWan16}$^,$\cite{LazWol01}. 
We also note that our conjecture is similar in spirit to
the 1967 conjecture of \'{A}d\'{a}m's\cite{Adam} that states that two circulant 
graphs ${\rm Cay}(\mathbb{Z}_n,S)$ and ${\rm Cay}(\mathbb{Z}_n,T)$, 
$T,S\subseteq\mathbb{Z}_n$, are isomorphic if and only if $S = mT$ for some 
$m\in\mathbb{Z}^{\ast}_n$. While \'{A}d\'{a}m's conjecture  
was soon shown to be false (Elspas and Turner\cite{Elspas_Turner}), a number of questions surrounding it 
have since drawn considerable attention (see surveys of Klin, Muzychuk, and P\"{o}schel\cite{Klin_Muzy_Pos}, and P\'{a}lfy\cite{Palfy}, or 
more recent works of 
Muzychuk\cite{Muz_1999}$^,$\cite{Muz_2004}, and 
Evdokimov and Ponomarekno\cite{Evd_Pon}).
\smallskip

For any integer $a$, we let $\nb{a}$ denote the greatest common divisor of $a$ and $q-1$.

\begin{theorem}\label{DLV}{\rm [Dmytrenko, Lazebnik, and Viglione\cite{DLV05}]}
{\it 
$G(q; m_1,n_1)\cong G(q; m_2,n_2)$ if and only if
$\{ \nb{m_1}, \nb{n_1}\} = \{ \nb{m_2}, \nb{n_2}\}$
as multisets.
}
\end{theorem}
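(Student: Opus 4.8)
The plan is to prove both implications directly: for sufficiency I would exhibit explicit isomorphisms, and for necessity I would extract the multiset $\{\nb{m},\nb{n}\}$ from a graph-isomorphism invariant. Throughout, write $d_m=\nb{m}$ and recall that, since $\fqs$ is cyclic of order $q-1$, the map $x\mapsto x^m$ carries $\fqs$ onto its unique subgroup $H_m$ of order $(q-1)/d_m$ and is exactly $d_m$-to-one there.

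For sufficiency I would use two elementary isomorphisms. First, the involution interchanging the partite sets, sending the $P$-vertex $(a,b)$ to the $L$-vertex $(a,b)$ and vice versa, carries an edge $p_2+l_2=p_1^m l_1^n$ of $G(q;m,n)$ to the same equation read in $G(q;n,m)$; hence $G(q;m,n)\cong G(q;n,m)$. Second, I claim $\nb{m_1}=\nb{m_2}$ implies $G(q;m_1,n)\cong G(q;m_2,n)$. Fixing $L$ pointwise and acting on $P$ by $(p_1,p_2)\mapsto(\rho(p_1),p_2)$, the condition for this to be an isomorphism onto $G(q;m_2,n)$ is precisely $\rho(p_1)^{m_2}=p_1^{m_1}$ for all $p_1\in\fq$. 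Such a bijection $\rho$ fixing $0$ exists because $d_{m_1}=d_{m_2}$ forces $H_{m_1}=H_{m_2}$, so for each value $v$ in this common image the fibers $\{p_1:p_1^{m_1}=v\}$ and $\{y:y^{m_2}=v\}$ have the same cardinality $d_{m_1}$ and may be matched bijectively. The analogous statement holds for the second coordinate. Composing these moves, any equality of the ordered pairs of gcd's yields an isomorphism, and the partite-swap absorbs the remaining case in which the multisets agree but the ordered pairs do not.

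For necessity I would use the equivalence relation ``$u$ and $v$ have the same neighborhood'', whose classes and their size multiset are preserved by every graph isomorphism. A direct computation shows that two $P$-vertices $(p_1,p_2)$ and $(p_1',p_2')$ have equal neighborhoods iff $p_2=p_2'$ and $p_1^m=p_1'^m$: the defining equations agree for all $l_1$ exactly when $(p_1^m-p_1'^m)l_1^n=p_2-p_2'$ holds identically, which (test $l_1=0$ and $l_1=1$) is this condition. Hence for each of the $q$ choices of $p_2$ the first coordinate splits into one class of size $1$ (from $p_1=0$) and $(q-1)/d_m$ classes of size $d_m$; the $L$-side behaves symmetrically with $d_n$. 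Thus the multiset of neighborhood-class sizes of $G(q;m,n)$ consists of size-$1$ classes together with $q(q-1)/d_m$ classes of size $d_m$ and $q(q-1)/d_n$ classes of size $d_n$, and these contributions do not interfere since no $P$-vertex can share a neighborhood with an $L$-vertex, their neighborhoods lying in different parts.

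Finally I would recover $\{d_m,d_n\}$ from this invariant. Reading $q$ off the order $2q^2$ of the graph, for each integer $s>1$ the number of neighborhood-classes of size $s$ equals $\tfrac{q(q-1)}{s}$ times the number of indices among $\{m,n\}$ whose gcd with $q-1$ is $s$; dividing, one reads off exactly how many of $d_m,d_n$ equal each such $s$, and the number equal to $1$ is whatever is left to total two. Hence the neighborhood-class size multiset determines $\{d_m,d_n\}$, and since an isomorphism $G(q;m_1,n_1)\cong G(q;m_2,n_2)$ equates these invariants, it forces $\{\nb{m_1},\nb{n_1}\}=\{\nb{m_2},\nb{n_2}\}$. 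The main obstacle is the necessity direction, and within it the crucial step is choosing the neighborhood-class invariant: once one sees that ``same neighborhood'' isolates precisely the fibers of the coordinate monomials, the gcd's fall out by counting, whereas finer invariants such as pairwise codegrees deliver the same information only through a messier case analysis.
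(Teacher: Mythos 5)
Your proof is correct, but it is worth noting that the paper does not prove Theorem \ref{DLV} at all --- it is quoted from Dmytrenko, Lazebnik, and Viglione\cite{DLV05} --- and your route differs from the published one. For sufficiency, \cite{DLV05} (like Corollary \ref{cor1}(v) here) works with monomial substitutions $x\mapsto x^k$ for $k$ coprime to $q-1$, using that $\nb{m_1}=\nb{m_2}$ forces $m_2\equiv km_1 \bmod (q-1)$ for such a $k$; your fiber-matching bijection $\rho$ with $\rho(p_1)^{m_2}=p_1^{m_1}$ achieves the same thing more set-theoretically, and both are fine. The real divergence is in necessity: as the paper's concluding remarks indicate, the original proof extracts the gcd's by counting $K_{2,2}$-subgraphs (sufficient only for large $q$) and then copies of other $K_{s,t}$ to settle all $q$, a genuinely heavier count. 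Your invariant --- the multiset of sizes of ``same-neighborhood'' (twin) classes --- is sharper and uniform in $q$: the computation that $N(p_1,p_2)=N(p_1',p_2')$ iff $p_2=p_2'$ and $p_1^m=p_1'^m$ is right (each $l_1$ determines a unique $l_2$, so equality of neighborhoods is pointwise equality of the functions $l_1\mapsto p_1^m l_1^n-p_2$, and testing $l_1=0,1$ suffices), the class-size census ($q$ singletons plus $q(q-1)/\nb{m}$ classes of size $\nb{m}$ on the $P$-side, symmetrically on $L$, with no interference across parts since neighborhoods are nonempty subsets of opposite parts) is correct even in the degenerate case $\nb{m}=1$, and the recovery of the multiplicity of each $s>1$ among $\{\nb{m},\nb{n}\}$ from the number of size-$s$ classes, with the multiplicity of $1$ obtained by complementation, is airtight. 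In short: your argument buys a cleaner, more elementary necessity proof via a standard twin-class invariant, at no loss of generality, while the published approach via $K_{s,t}$-counts yields quantitative subgraph statistics that the twin-class argument does not.
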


For every digraph $D(q; f)$,  one can define a bipartite graph $G(q;f)$,
the {\it bipartite cover} of $D(q; f)$, in the following way.
Each partition $X$ and $Y$  of the  vertex set of $G(q;f)$ is defined to be a copy of $V(D(q;f))$,  and a vertex ${\bf x}=(x_1,x_2)\in X$ is joined to a vertex  ${\bf y}=(y_1,y_2)\in Y$  in $G(q;f)$ if and only if ${\bf x}\to {\bf y}$ in $D(q;f)$.
This construction is of special interest to us in view of the following
proposition that  provides us with the first non-trivial necessary condition for isomorphism of monomial digraphs.
\begin{proposition} \label{prop1}
 If $D(q; m_1,n_1)\cong D(q; m_2,n_2)$, then
$G(q; m_1,n_1)\cong G(q; m_2,n_2)$  and $\{ \nb{m_1}, \nb{n_1}\} = \{ \nb{m_2}, \nb{n_2}\}$ as multisets.
\end{proposition}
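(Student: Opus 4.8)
The plan is to split the statement into two independent parts and prove them in sequence: first, that the digraph isomorphism $D(q;m_1,n_1)\cong D(q;m_2,n_2)$ forces an isomorphism of the associated bipartite covers $G(q;m_1,n_1)\cong G(q;m_2,n_2)$; and second, that this graph isomorphism, fed into Theorem~\ref{DLV}, yields the multiset equality $\{\nb{m_1},\nb{n_1}\}=\{\nb{m_2},\nb{n_2}\}$. The second part is then immediate, so essentially all the content lives in the first.

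For the first part, I would begin by recording the key identification: the bipartite cover of $D(q;m,n)$, as defined in the paragraph preceding the proposition, is \emph{exactly} the graph $G(q;m,n)$. Indeed, in both constructions one takes two copies $X$ and $Y$ of $\fq^2$ and joins $(x_1,x_2)\in X$ to $(y_1,y_2)\in Y$ precisely when $x_2+y_2=x_1^m y_1^n$; setting $X=P$ and $Y=L$ makes the two adjacency rules coincide. Thus it suffices to show that the assignment $D\mapsto G(q;f)$ carries digraph isomorphisms to graph isomorphisms.

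To see this, let $\phi\colon V(D_1)\to V(D_2)$ be a digraph isomorphism, so that $\mathbf{x}\to\mathbf{y}$ in $D_1$ if and only if $\phi(\mathbf{x})\to\phi(\mathbf{y})$ in $D_2$. I would then define $\Phi\colon V(G_1)\to V(G_2)$ by letting $\Phi$ act as $\phi$ on the copy $X_1$ (with image in $X_2$) and again as $\phi$ on the copy $Y_1$ (with image in $Y_2$). This $\Phi$ is a bijection carrying each partition class of $G_1$ onto the corresponding class of $G_2$, and for adjacency one checks the chain: $\mathbf{x}\in X_1$ is adjacent to $\mathbf{y}\in Y_1$ in $G_1$ iff $\mathbf{x}\to\mathbf{y}$ in $D_1$ iff $\phi(\mathbf{x})\to\phi(\mathbf{y})$ in $D_2$ iff $\Phi(\mathbf{x})\in X_2$ is adjacent to $\Phi(\mathbf{y})\in Y_2$ in $G_2$. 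Hence $\Phi$ is a graph isomorphism and $G(q;m_1,n_1)\cong G(q;m_2,n_2)$. Applying Theorem~\ref{DLV} to this isomorphism then delivers $\{\nb{m_1},\nb{n_1}\}=\{\nb{m_2},\nb{n_2}\}$ as multisets, completing the proof.

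Conceptually the argument is short, and there is no serious obstacle; the one point that must be handled with care is the direction of the biadjacency. The bipartite cover records arcs with their orientation (tail in $X$, head in $Y$), and it is exactly the orientation-preserving property of the digraph isomorphism $\phi$ that allows the \emph{same} underlying map to act correctly on both sides of the cover simultaneously. Because $\Phi$ is written down explicitly rather than shown to exist by a counting or connectivity argument, no further analysis of the structure of $G(q;m,n)$ is required at this stage.
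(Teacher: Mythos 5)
Your proposal is correct and follows exactly the paper's route: identify the bipartite cover of $D(q;m,n)$ with $G(q;m,n)$, observe that a digraph isomorphism induces an isomorphism of the bipartite covers, and invoke Theorem~\ref{DLV} for the multiset equality. The paper merely asserts these two facts in one sentence, while you spell out the explicit map $\Phi$ and the adjacency check, so yours is a fully detailed version of the same argument.
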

The  first part of Proposition \ref{prop1} 
simply states that two isomorphic digraphs have isomorphic bipartite covers, 
and the second part follows from Theorem \ref{DLV}.
In contrast to the case of the monomial bipartite graphs,  this necessary condition of Proposition \ref{prop1} is far from being sufficient for the isomorphism of monomial digraphs.
\medskip

In the following section we discuss some general properties of  isomorphisms of monomial digraphs. In Section 3 we prove the sufficiency of Conjecture \ref{main_conj} and present several  other sufficient or  necessary conditions on the parameters of isomorphic monomial digraphs.  In Section 4 we finish the note with some concluding remarks.

\section{Some general properties of isomorphisms of  monomial digraphs}

Suppose digraphs $D_1 = D(q; m_1,n_1)$ and $D_2 = D(q; m_2,n_2)$ are isomorphic via an isomorphism $\phi\colon V(D_1) \to V(D_2)$, $(x,y)\mapsto \phi((x,y)) = (\phi_1((x,y)), \phi_2((x,y))$. For brevity, we will write $\phi(x,y)$ for $\phi((x,y))$. 
Functions $\phi_1$ and $\phi_2$ can be considered polynomial functions of two variables on $\fq$  of degree 
at most $q-1$ with respect to each variable. 

A polynomial $h\in \fq [X_1,\ldots, X_n]$  is called a {\it permutation polynomial} in $n$ variables on $\fq$  if the equation $h(x_1,\ldots,x_n)=\alpha$ has exactly $q^{n-1}$ solutions in $\fq^n$ for each $\alpha\in \fq$.  For $n=1$, this definition implies that the  function  on $\fq$ induced by $h$ is a bijection,  and in this case $h$ is called just a {\it permutation polynomial} on $\fq$.

The following
theorem describes some properties of the functions induced by the polynomials $\phi_1 = f$ and $\phi_2 = g$, and imposes a strong restriction  on  the form of $g$.
\begin{theorem}
\label{thm:f_and_g}
{\it 
Let $q$ be an odd prime power,  $D_1 = D(q; m_1,n_1)\cong D_2 = D(q; m_2,n_2)$ with an isomorphism
given in the form
\[
\phi\colon V(D_1)\to V(D_2),\;
(x,y)\mapsto (f(x,y),g(x,y))
\]
for some $f,g\in\fq[X,Y]$ of degree at most $q-1$ in each of the variables.
Then the following statements hold.
\begin{enumerate}
\item[(i)]
 $f$ and $g$ are permutation polynomials in two variables on $\fq$.
\item[(ii)]
If $m_1\neq n_1$, then $f(x,y) = 0$ if and only if $x = 0$.
\item[(iii)] 
If $m_1\neq n_1$, then $g$ is a polynomial of indeterminant $Y$ only, and is of the form
\[
g(Y) = a_{q-2}Y^{q-2} + a_{q-4}Y^{q-4} + \dots + a_1 Y,
\]
where all $a_i\in\fq$, $i=1,\dotso,q-2$. Moreover, $g$ is a permutation polynomial on $\fq$.
\end{enumerate}
}
\end{theorem}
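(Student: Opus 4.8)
The plan is to derive all three parts from the single principle that an isomorphism carries out-neighborhoods to out-neighborhoods and in-neighborhoods to in-neighborhoods. Write $\phi(x,y)=(f(x,y),g(x,y))$, and note that in $D(q;m,n)$ the out-neighborhood of $(x_1,x_2)$ is the graph of $t\mapsto x_1^m t^n-x_2$ while its in-neighborhood is the graph of $t\mapsto t^m x_1^n-x_2$; both are graphs of functions of the first coordinate, so two neighborhoods of this kind coincide as sets exactly when the associated functions agree. Part (i) needs only bijectivity: for fixed $\alpha\in\fq$ the set $f^{-1}(\alpha)$ is the $\phi$-preimage of the $q$ points of $\fq^2$ with first coordinate $\alpha$, hence has exactly $q$ elements, and as $\alpha$ varies this realizes $f$ as a permutation polynomial in two variables; the identical count on the second coordinate does the same for $g$.

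The heart of the matter is (ii), where I would characterize the vertices with first coordinate $0$ in an isomorphism-invariant way. When $m\neq n$, I claim $v$ satisfies $N^{+}(v)=N^{-}(v)$ if and only if its first coordinate is $0$. If $x_1=0$ both neighborhoods equal $\fq\times\{-x_2\}$; if $x_1\neq 0$, equality would force $x_1^m t^n=x_1^n t^m$ for all $t$, i.e.\ $x_1^{m-n}=t^{m-n}$ for every $t\in\fqs$, which is impossible because $m\not\equiv n\pmod{q-1}$ makes $t\mapsto t^{m-n}$ nonconstant. Since $\phi$ carries out- to out- and in- to in-neighborhoods, the property $N^{+}=N^{-}$ is isomorphism-invariant, so $\phi$ sends $\{x=0\}$ of $D_1$ onto the set of $D_2$-vertices with $N^{+}=N^{-}$. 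To identify this image with $\{x=0\}$ in $D_2$ I must first exclude $m_2=n_2$: if $m_2=n_2$ the defining relation of $D_2$ is symmetric in its two vertices, so \emph{every} vertex of $D_2$ would satisfy $N^{+}=N^{-}$, producing $q^2$ such vertices against only $q$ in $D_1$ --- impossible under an isomorphism. Thus $m_2\neq n_2$, the same characterization holds in $D_2$, and $f(x,y)=0$ iff $x=0$.

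Part (iii) then follows by tracking the vertices $(0,x_2)$. By (ii) each maps to $(0,g(0,x_2))$, and its out-neighborhood $\fq\times\{-x_2\}$ must map to the out-neighborhood of $(0,g(0,x_2))$, which is again a horizontal line of the form $\fq\times\{c\}$; hence $\phi$ permutes the $q$ horizontal lines, which forces $g(x_1,x_2)$ to be independent of $x_1$, so $g=g(Y)$. Since $\phi$ restricts to a bijection of $\{x=0\}$, the induced map $x_2\mapsto g(x_2)$ is a bijection, so $g$ is a permutation polynomial on $\fq$. For the precise form I would use that the arc $(0,x_2)\to(0,-x_2)$ exists in $D_1$ (as $0=0^{m_1}0^{n_1}$); its image in $D_2$ forces $g(x_2)+g(-x_2)=0$ for all $x_2$, so $g$ is an odd function. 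Comparing coefficients in the unique degree-$\le q-1$ representative, oddness annihilates every even-degree monomial together with the constant term, leaving exactly $g(Y)=a_{q-2}Y^{q-2}+\cdots+a_1Y$.

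I expect the main obstacle to be pinning down the correct invariant in (ii): crude invariants such as the sizes of the ``same-neighborhood'' classes degenerate precisely when $\nb{m}=1$, so the self-converse condition $N^{+}=N^{-}$, combined with the separate exclusion of $m_2=n_2$ by a counting argument, is what makes the characterization robust; arranging these two observations to interlock cleanly is the delicate step, after which (iii) is essentially bookkeeping.
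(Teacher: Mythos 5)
Your proof is correct, and for parts (ii) and (iii) it takes a genuinely different route from the paper's; part (i) is essentially the paper's argument (the level sets of $f$ and $g$ are $\phi$-preimages of lines, hence have exactly $q$ points each). For (ii), the paper never isolates the invariant $N^{+}(v)=N^{-}(v)$: it substitutes the adjacency equation of $D_1$ into the transported adjacency equation of $D_2$ to show that $g$ takes the constant value $-g(a,b)$ on both the out- and the in-neighborhood of any zero $(a,b)$ of $f$, and the two-variable permutation property of $g$ from (i) then forces these two $q$-element sets to coincide, giving $a^{m_1}t^{n_1}=a^{n_1}t^{m_1}$ for all $t$ and hence $a=0$ by root counting. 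So both proofs hinge on the same geometric coincidence of the two neighborhoods at a zero of $f$, but the paper reaches it using $m_1,n_1$ alone and thus never needs your separate step ruling out $m_2=n_2$; your invariant genuinely degenerates when $m_2=n_2$ (all $q^2$ vertices satisfy $N^{+}=N^{-}$), and your $q$-versus-$q^2$ count is exactly the needed patch, so that step is essential to your route and correct. For (iii), the paper proves your horizontal-line identity in the form $g(t,-b)=-g(0,b)$ and then extracts univariateness and oddness coefficientwise via the decompositions $g=Yg_1(X,Y)+\hat{g}(X)$ and $g_1=Xh_1(X,Y)+h_2(Y)$, killing $h_1$ by counting roots of its coefficient polynomials; your version (lines map to lines, plus the arc $(0,y)\to(0,-y)$ for oddness) is shorter and cleaner, but you should state explicitly that the passage from the function identity to the polynomial identity uses uniqueness of the reduced representation of degree at most $q-1$ in each variable --- i.e., $g(X,Y)-g(0,Y)$ vanishes on all of $\fq^2$ and is reduced, hence is the zero polynomial --- since the theorem asserts a property of the polynomial $g$, not merely of the induced function (you invoke this fact explicitly only for the oddness step). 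Your uses of $q$ odd (killing $2a_i$ for even $i$, including $a_0$ and $a_{q-1}$ since $q-1$ is even) line up with the paper's, and your bijectivity argument for $g$ on the column $\{x=0\}$ replaces the paper's remark that a univariate two-variable permutation polynomial is a permutation polynomial; both are valid.
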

\begin{proof}
   As $\phi$ is a bijection, the system
    \[
        \left\{
        \begin{array}{ccc}
            f(x,y) & = & a,\\
            g(x,y) & = & b,
        \end{array}
        \right.
    \]
    has a solution for every pair $(a,b)\in\fq^2$.
    Fix an $a$ and let $b$ vary through all of
    $\fq$. This gives $q$ distinct solutions
    $(x_i, y_i)$, $i=0,\dots, q-1$, of the system. Note that for every $i$,
    we have $f(x_i, y_i)=a$, so these are $q$ distinct
    points at which $f$ takes on the 
    value $a$.
    Assume that for some $(x^*,y^*)$ distinct  from each
    $(x_i,y_i)$ we have $f(x^*,y^*)=a$. As $g(x_i,y_i)$
    runs through all of $\fq$, we have
    $g(x^*,y^*) = g(x_i,y_i)$ for some $i$. Then, for this $i$, we have
    \[
        \phi (x^*,y^*)=
        \Bigr(f(x^*,y^*),g(x^*,y^*)\Bigr)=
        \Bigr(f(x_i,y_i),g(x_i,y_i)\Bigr)=
        \phi(x_i,y_i),
    \]
    contradicting the choice of $(x^*,y^*)$. Hence, the equation  $f(x,y)=\alpha$  has exactly $q$ solutions for each $\alpha\in \fq$,  and so $f$ is a permutation polynomial in two variables on $\fq$.  The proof of the statement for $g$ is similar. This proves part (i).

\bigskip

Since $\phi$ is an isomorphism, the following two equations
\begin{equation}
    \label{eqn:adjacency}
    x_2 + y_2 = x_1^{m_1} y_1^{n_1},
\end{equation}
\begin{equation}
    \label{eqn:isomor_adj}
    g(x_1, x_2) + g(y_1, y_2) =
    f(x_1, x_2)^{m_2} \cdot f(y_1, y_2)^{n_2}
\end{equation}
are equivalent.

From \eqref{eqn:adjacency}, $y_2 = x_1^{m_1} y_1^{n_1} - x_2$, and
substituting this expression for $y_2$ in \eqref{eqn:isomor_adj} we have
    \begin{equation}
        \label{eqn:mn_plugged}
        g(x_1, x_2) + g(y_1, x_1^{m_1} y_1^{n_1} - x_2) =
        f(x_1, x_2)^{m_2}\cdot
        f(y_1, x_1^{m_1} y_1^{n_1} - x_2)^{n_2},
    \end{equation}
for all
$x_1,x_2,y_1\in\fq$.  Let $(a,b)\in \fq^2$ be such that $f(a,b)=0$  (its existence follows from part (i)).  Set $(x_1, x_2) = (a,b)$,
and set  $y_1 = s$. Then \eqref{eqn:mn_plugged} yields
    \begin{equation}
        \label{eqn:amtn}
        g(a,b) + g(s, a^{m_1} s^{n_1} - b) = 0,\quad\text{for all }\, s\in\fq.
    \end{equation}
Likewise from \eqref{eqn:adjacency}, $x_2 = x_1^{m_1} y_1^{n_1}-y_2$.  Substituting this expression for $x_2$ in \eqref{eqn:isomor_adj}, and
setting $x_1 = t$ and  $(y_1, y_2) = (a,b)$,
we obtain
    \begin{equation}
        \label{eqn:tman}
       g(t, t^{m_1} a^{n_1} - b) + g(a,b)  = 0,\quad \text{for all }\, t\in\fq.
    \end{equation}
Hence, \eqref{eqn:amtn} and \eqref{eqn:tman} yield
    \[
        g(s, a^{m_1} s^{n_1} - b) =
        g(t, a^{n_1} t^{m_1} - b) =
        -g(a,b),\quad\text{for all }\, s, t\in\fq.
    \]
From part (i), $g$ is a permutation polynomial in two variables, and
we conclude that  the set
\[
\{(s, a^{m_1} s^{n_1} -b),(t, a^{n_1} t^{m_1} - b)\,:\, s,t\in \fq\}
\]
contains exactly $q$ elements. As $(s, a^{m_1} s^{n_1} -b)=(t, a^{n_1} t^{m_1} - b)$
implies $s=t$, we obtain that
$a^{m_1} t^{n_1} - b = a^{n_1} t^{m_1} - b$ for all $t\in\fq$. Since $m_1\neq n_1$,
this implies $a = 0$ as, otherwise the polynomial
$X^{|m_1-n_1|}-a^{|m_1-n_1|}\in\fq[X]$ of degree $|m_1-n_1|\le q-2$ has
$q$ roots.

Thus, $f(a,b) = 0$ implies $a = 0$. From part (i),  $f$ is a permutation polynomial in two variables. Let $\{ (a_i,b_i)\}_{i=1}^q$ be the
set of $q$ distinct points at which $f$ is zero. Then $a_i = 0$ for all
$i$, and all $b_i$ must be distinct. That is, $f(0,b) = 0$ for any $b\in\fq$.
This proves part (ii).

\bigskip

We now turn to the proof of part (iii). We just concluded that $f(a,b) = 0$ implies $a = 0$. Substituting $a=0$ in  (\ref{eqn:tman}) we obtain
\begin{equation}
\label{eqn:gtb}
g(t,-b) = -g(0,b),\quad\text{ for all } b,t\in\fq.
\end{equation}
Write $g(X,Y) = Yg_1(X,Y) + \hat{g}(X)$ for some $g_1\in\fq[X,Y]$, and
$\hat{g}\in\fq[X]$ of degree at most $q-1$.
Now from (\ref{eqn:gtb}), we have $g(x,0) = \hat{g}(x) = -g(0,0)$
    for every $x\in \fq$. Since the degree of $\hat{g}$ is at most $q-1$,
    it follows that $\hat{g}(X)$ is a constant
    polynomial. Also from (\ref{eqn:gtb}), $g(0,0) = \hat{g}(0) = -g(0,0)$,
    and, as $q$ is odd, $\hat{g}$ is the zero polynomial. Thus $g(X,Y) = Y g_1(X,Y)$ for some
    $g_1\in\fq[X,Y]$, where the degree of $g_1$ in $Y$ is at most $q-2$.

Using (\ref{eqn:gtb}) again, we find that
\begin{equation}
\label{eqn:g1tb}
g_1(t,-b) = g_1(0,b),\quad\text{ for all } b\in\fqs,\,t\in\fq.
\end{equation}
Write $g_1(X,Y) = X h_1(X,Y) + h_2(Y)$, where
$h_1\in\fq[X,Y]$, $h_2\in\fq[Y]$.
By (\ref{eqn:g1tb}), for all $t\in\fq$ and
    all $b\in\fqs$ we have
    \begin{equation}
    \label{eqn:gh1h2}
        g_1(t,-b)=t h_1(t,-b)+h_2(-b)=
        g_1(0,b)=h_2(b).
    \end{equation}
    For $t=0$, it implies   that
    $h_2(b) = h_2(-b)$ for all $b\in\fqs$, and since $q$ is odd,  and 
    the degree of $h_2$ is at most $q-2 $, we have
    $h_2(Y) = \sum_{i=0}^{(q-3)/2} \tilde{a}_{i} Y^{2i}$
    for some $\tilde{a}_{i}\in\fq$, $0\le i\le (q-3)/2$.
    From (\ref{eqn:gh1h2}), it now follows that for every $t\in\fq$ and every
    $b\in\fqs$, $t h_1(t,-b)=0$, and so $h_1(t,-b)=0$ for all $b,t\in\fqs$.
    Write $h_1(X,Y)$ as
    \[
        h_1 = h_1(X,Y) =
        c_{q-2}(Y)X^{q-2} + c_{q-3}(Y)X^{q-3} +
        \dotsb +
        c_1(Y)X + c_0(Y),
    \]
    where all $c_i\in\fq[Y]$ are of degree at most $q-2$.
   For any fixed $b\in \fqs$,  the polynomial $h_1(-b, Y)$ of degree at most $q-2$ has $q-1$ roots. Hence,
    $c_i(-b)=0$ for all $i$, $0\le i\le q-2$, and so all $c_i(Y)$ are  zero polynomials. Thus, $h_1(X,Y)$ is the zero polynomial. Therefore,
    \[
    g(X,Y) =
    Yg_1(X,Y) = Y(X h_1(X,Y) + h_2(Y)) = Y h_2(Y) =
    \sum_{i=0}^{(q-3)/2} \tilde{a}_{2i} Y^{2i+1}.
    \]
     Set $a_{i+1} = \tilde{a}_{2i}$
    for all $i$, $0\le i\le (q-3)/2$, so
    \begin{equation}
    \label{eqn:shape_g}
    g(Y) = a_{q-2}Y^{q-2} + a_{q-4}Y^{q-4} + \dots + a_1 Y.
    \end{equation}
    Every  permutation polynomial in two variables,  which is actually a polynomial of one variable,  has to be a permutation polynomial. By part (i), and by the last expression for $g$ as  $g(Y)$,  we obtain that $g$ is a permutation polynomial.   This ends the proof  of part (iii), and of the theorem.
\end{proof}
Theorem \ref{thm:f_and_g} will be used in the proof of Theorem~\ref{thm:bars} of the next section.

\section{Conditions on the parameters of isomorphic monomial digraphs}

We begin with the proof of the sufficiency part of Conjecture \ref{main_conj}, and provide several more sufficient conditions for 
the isomorphism of monomial digraphs in Corollary \ref{cor1}. 
Then we obtain 
several necessary conditions for monomial digraphs to be isomorphic. 
\begin{theorem}
\label{conj_suff}
{\it 
Suppose  there exists an integer $k$
 such that $\nb{k} = 1$ and
\[
m_2 \equiv k m_1 \mod (q-1),
\]
\[
n_2 \equiv k n_1 \mod (q-1).
\]
Then $D(q; m_1,n_1)\cong D(q; m_2,n_2)$.
}
\end{theorem}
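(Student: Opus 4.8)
The plan is to exhibit an explicit isomorphism that acts as a power map on the first coordinate and as the identity on the second. Since $\gcd(k,q-1)=1$, the map $x\mapsto x^k$ permutes $\fqs$ and fixes $0$, so it is a bijection of $\fq$; here I may assume $k$ is a positive integer, replacing it if necessary by any positive representative of its residue modulo $q-1$, which alters neither the coprimality hypothesis, nor the two congruences, nor the function $x\mapsto x^k$. I would then define
\[
\psi\colon V(D(q;m_2,n_2))\to V(D(q;m_1,n_1)),\qquad (x_1,x_2)\mapsto (x_1^k,x_2),
\]
and show that $\psi$ is an isomorphism; as isomorphism is a symmetric relation, exhibiting $\psi$ in this direction yields $D(q;m_1,n_1)\cong D(q;m_2,n_2)$.

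First I would observe that $\psi$ is a bijection of $\fq^2$, since its first component is a bijection of $\fq$ and its second component is the identity. The single elementary fact I would isolate is that for positive integers $a\equiv b\pmod{q-1}$ the monomials $X^a$ and $X^b$ induce the same function on $\fq$ (they agree on $\fqs$, whose order is $q-1$, and both vanish at $0$).

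The heart of the verification is then a direct comparison of adjacencies. A pair $((x_1,x_2),(y_1,y_2))$ is an arc of $D(q;m_2,n_2)$ exactly when $x_2+y_2=x_1^{m_2}y_1^{n_2}$, whereas its image $((x_1^k,x_2),(y_1^k,y_2))$ is an arc of $D(q;m_1,n_1)$ exactly when $x_2+y_2=x_1^{km_1}y_1^{kn_1}$. By the hypotheses $m_2\equiv km_1$ and $n_2\equiv kn_1\pmod{q-1}$ together with the fact above, $x_1^{km_1}=x_1^{m_2}$ and $y_1^{kn_1}=y_1^{n_2}$ as functions on $\fq$, so the two arc conditions coincide. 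Hence $\psi$ carries arcs to arcs and non-arcs to non-arcs, and is an isomorphism. I do not expect any genuine obstacle here: the argument rests only on the characterization of power permutations of $\fq$ and on the invariance of monomial functions under reduction of exponents modulo $q-1$.
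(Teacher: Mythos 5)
Your proposal is correct and is essentially identical to the paper's own proof: both exhibit the map $(x,y)\mapsto(x^k,y)$ from $V(D(q;m_2,n_2))$ to $V(D(q;m_1,n_1))$, use $\nb{k}=1$ for bijectivity, and verify the arc condition via the congruences on the exponents. Your added care about choosing a positive representative of $k$ and about monomials agreeing on all of $\fq$ (including at $0$) under reduction of exponents modulo $q-1$ is a harmless refinement of the same argument.
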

\begin{proof}
Define the mapping $\phi\colon V(D(q;m_2,n_2))\to V(D(q;m_1,n_1))$ via the
rule
\[
\phi \colon (x,y)\mapsto (x^k,y).
\]
As $\nb k =1$, $\phi$ is bijective and we check that $\phi$ preserves adjacency and non-adjacency. Let $(x_1,x_2)\to (y_1,y_2)$  in $D(q;m_2,n_2)$. Then
 $x_2 + y_2 = x_1^{m_2} y_1^{n_2}$. We have
\[
\phi(x_1,x_2)  = (x_1^k,x_2),
\]
\[
\phi(y_1,y_2)  = (y_1^k,y_2),
\]
and
\[
x_2 + y_2
=
x_1^{m_2} y_1^{n_2}\;\;
\Leftrightarrow \;\;
x_2 + y_2
= (x_1^k)^{m_1} (y_1^k)^{n_1}.
\]
Hence, $(\phi(x_1,x_2),\phi(y_1,y_2)) = ((x_1^k,x_2),(y_1^k,y_2))$ is an arc in $D(q;m_1,n_1)$,
and $\phi$ is indeed an isomorphism from $D(q;m_2,n_2)$ to $D(q;m_1,n_1)$.
\end{proof}

\begin{corollary}
\label{cor1}
The following statements hold.
\begin{enumerate}
\item[(i)]
If $\nb{m} = 1$,
then
$D(q; m,n) \cong D(q; 1,n')$,  for some integer $n'$ such that $mn'\equiv n \mod (q-1)$.
\item[(ii)]
If $m n \equiv 1 \mod (q-1)$, then
$D(q; m,1) \cong D(q; 1,n)$, and  $D(q; m,n) \cong D(q; 1,n^2)\cong D(q; m^2, 1)$.
\item[(iii)] If $m+n\equiv 0 \mod (q-1)$, then $D(q; m,n)\cong D(q;n,m)$.
\item [(iv)] If $D(q;m_1,n_1) \cong D(q;m_2,n_2)$ and $m_1=n_1$, then $m_2=n_2$, and $\nb{m_1}=  \nb{m_2}$.
\item[(v)] If  $\nb{m}=  \nb{n}$, then  $D(q; m,m) \cong D(q;n,n)$.
\end{enumerate}
\end{corollary}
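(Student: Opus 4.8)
The plan is to obtain parts (i)–(iii) and (v) as applications of Theorem~\ref{conj_suff} with a judiciously chosen multiplier $k$, to obtain part (iv) from an isomorphism-invariant (the \emph{symmetry} of the arc relation), and to reduce part (v) to a purely number-theoretic existence statement about units in an arithmetic progression, which I expect to be the only genuinely nontrivial point.

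For part (i), since $\nb{m}=1$ the residue $m$ is invertible modulo $q-1$; I would set $n'\equiv m^{-1}n \pmod{q-1}$, so that $mn'\equiv n$, and apply Theorem~\ref{conj_suff} to $(m_1,n_1)=(1,n')$ and $(m_2,n_2)=(m,n)$ with $k=m$: the two congruences read $m\equiv k\cdot 1$ and $n\equiv k n'$, both of which hold. For part (ii), $mn\equiv 1$ forces $\nb{m}=\nb{n}=1$; the three claimed isomorphisms follow from Theorem~\ref{conj_suff} with, respectively, $k=m$ (comparing $(1,n)$ with $(m,1)$), $k=m$ (comparing $(1,n^2)$ with $(m,n)$, using $mn^2=(mn)n\equiv n$), and $k=n$ (comparing $(m^2,1)$ with $(m,n)$, using $nm^2=(mn)m\equiv m$). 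For part (iii), $m+n\equiv 0$ means $n\equiv -m$; since $\nb{-1}=1$, Theorem~\ref{conj_suff} with $k=-1$ applied to $(m,n)$ and $(n,m)$ gives the claim, the two congruences $n\equiv -m$ and $m\equiv -n$ both holding.

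For part (iv), I would use the fact that the property ``$(\mathbf{u},\mathbf{v})\in A \iff (\mathbf{v},\mathbf{u})\in A$ for every ordered pair'' — i.e.\ that the arc relation is symmetric — is preserved by digraph isomorphism. First I would show that $D(q;m,n)$ has this property if and only if $m=n$: after fixing $x_1,y_1$ and letting $x_2+y_2$ range over $\fq$, the condition $\mathbf{x}\to\mathbf{y}\iff\mathbf{y}\to\mathbf{x}$ amounts to the identity $x_1^{m}y_1^{n}=x_1^{n}y_1^{m}$ for all $x_1,y_1\in\fq$; taking $y_1=1$ this says $z^{m-n}=1$ for every $z\in\fqs$, and since $\fqs$ is cyclic of order $q-1$ while $|m-n|\le q-2$, it holds exactly when $m=n$. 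Consequently, if $D(q;m_1,n_1)\cong D(q;m_2,n_2)$ with $m_1=n_1$, then the right-hand digraph is symmetric as well, whence $m_2=n_2$; the equality $\nb{m_1}=\nb{m_2}$ follows from Proposition~\ref{prop1}, whose multiset equation $\{\nb{m_1},\nb{m_1}\}=\{\nb{m_2},\nb{n_2}\}$ forces $\nb{m_2}=\nb{n_2}=\nb{m_1}$. I note this argument needs neither $q$ odd nor Theorem~\ref{thm:f_and_g}.

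For part (v) I would again invoke Theorem~\ref{conj_suff}, comparing $(m,m)$ with $(n,n)$: it suffices to produce an integer $k$ with $\nb{k}=1$ and $km\equiv n\pmod{q-1}$. Writing $d=\nb{m}=\nb{n}$, $m=dm_0$, $n=dn_0$, and $q-1=d\ell$, the congruence $km\equiv n$ is equivalent to $km_0\equiv n_0\pmod{\ell}$, and since $\gcd(m_0,\ell)=\gcd(n_0,\ell)=1$ its solution set is the single class $k\equiv n_0 m_0^{-1}\pmod{\ell}$, which is coprime to $\ell$. The main obstacle is then the arithmetic lemma that such a residue class modulo $\ell$ always contains a representative coprime to the full modulus $q-1=d\ell$; I would prove this by the Chinese Remainder Theorem, choosing $k\equiv n_0 m_0^{-1}\pmod{\ell}$ and $k\equiv 1\pmod{r}$, where $r$ is the product of the primes dividing $q-1$ but not $\ell$, so that every prime divisor of $q-1$ divides either $\ell$ or $r$ and hence fails to divide $k$. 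This yields the required $k$ and completes part (v).
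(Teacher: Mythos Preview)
Your proof is correct and follows essentially the same approach as the paper for all five parts: applying Theorem~\ref{conj_suff} with the natural choice of $k$ for (i)--(iii) and (v), and using the isomorphism-invariance of arc-symmetry together with Proposition~\ref{prop1} for (iv). The only substantive difference is that in (v) the paper simply cites the number-theoretic fact (from \cite{DLV05} or \cite{Viglione_thesis}) that $\nb{m}=\nb{n}$ guarantees a unit $k$ with $km\equiv n\pmod{q-1}$, whereas you supply a clean self-contained proof via the Chinese Remainder Theorem; this is a nice bonus that makes your argument independent of external references.
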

\begin{proof}
Part (i) is straightforward. As $\nb{m} = 1$,  there exists an integer $k$ such that $\nb k = 1$ and $1\equiv km \mod (q-1)$. Let  $n' \equiv kn \mod (q-1)$.  By Theorem \ref{conj_suff},  $D(q; m,n) \cong D(q; 1,n')$.

For part (ii),
 $m n \equiv 1 \mod (q-1)$ is equivalent to   $\nb{m} = \nb{n} = 1$,  and the conclusion  follows directly
from Theorem \ref{conj_suff} by taking $k$ equal $m$ or $n$.

For part (iii),  we need to show that $D(q; m,-m)\cong D(q;-m,m)$. As $\nb{-1} =1$, the statement follows from
Theorem \ref{conj_suff}.

Let us prove  part (iv).  If $m_1=n_1$,  then for every arc of $D_1$,  the opposite arc is also an arc of $D_1$. As $D_1\cong D_2$, for every arc of $D_2$,  the opposite arc is also an arc of $D_2$. Consider an arc
of $D_2$ of the form $(a,b)\to (1,a^{m_2} - b)$. Then $D_2$ contains the opposite arc $(1,a^{m_2} - b)\to (a,b)$ only if $a^{n_2}=a^{m_2}$.  Taking $a$ to be a primitive element of $\fq$, we obtain $m_2=n_2$.  Then the equality  $\nb{m_1}=  \nb{m_2}$ follows from Proposition \ref{prop1}.

For part (v), let $m_1=n_1=m$ and $m_2=n_2=n$. 
We use the following number-theoretic result: 
if $\nb{m} = \nb{n}$, then there exists an integer $k$ 
coprime with $q-1$ such that $mk \equiv n \mod (q-1)$. 
For a proof of a more general related result see\cite{DLV05} or Viglione\cite{Viglione_thesis}.  
Hence the conditions of Theorem \ref{conj_suff} are met, 
and $D(q; m,m) \cong D(q;n,n)$.
\end{proof}

The following statement provides some information on the automorphism groups of monomial digraphs.
The proof is trivial,  and we omit it.

\begin{proposition}\label{proppsi}
For any $c\in\F_q^{\ast}$, the mapping
$\psi_c\colon (x,y)\mapsto (cx,c^{m+n} y)$ is
an automorphism of $D(q; m,n)$. In particular,
the group of automorphisms of $D(q; m,n)$
contains a cyclic subgroup of order $q-1$ generated
by $\psi_g$, where $\langle g \rangle = \F_q^{\ast}$.
\end{proposition}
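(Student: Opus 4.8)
The plan is to verify directly that the map $\psi_c$ preserves the arc relation, which will establish that it is a graph homomorphism, and then to note that it is a bijection (indeed an involution-free bijection of finite sets, hence a permutation of the vertex set), thereby confirming it is an automorphism. The heart of the argument is a one-line computation: I would take an arbitrary arc $(x_1,x_2)\to(y_1,y_2)$ in $D(q;m,n)$, so that by definition $x_2 + y_2 = x_1^m y_1^n$, and then check that the images satisfy the defining adjacency relation as well. Under $\psi_c$ the images are $(cx_1, c^{m+n}x_2)$ and $(cy_1, c^{m+n}y_2)$, so the second coordinates sum to $c^{m+n}(x_2 + y_2)$, while the product of the first coordinates raised to the appropriate powers is $(cx_1)^m (cy_1)^n = c^{m+n} x_1^m y_1^n$. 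Since $x_2 + y_2 = x_1^m y_1^n$, multiplying both sides by $c^{m+n}$ shows these two quantities agree, so the image pair is again an arc.

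For the converse direction, which guarantees that $\psi_c$ preserves non-adjacency, I would observe that $c\in\F_q^{\ast}$ is invertible, so the same computation run in reverse (or simply applying $\psi_{c^{-1}}$) shows that if the images form an arc then so does the original pair. Equivalently, $\psi_c$ is a bijection on $V(D) = \F_q^2$ because its first-coordinate action $x\mapsto cx$ and its second-coordinate action $y\mapsto c^{m+n}y$ are each bijections of $\F_q$; a bijective graph homomorphism between finite digraphs whose inverse is also a homomorphism is an isomorphism, and since domain and codomain coincide it is an automorphism. The inverse is explicitly $\psi_{c^{-1}}$, so the claim that $\psi_c$ is an automorphism follows without any appeal to cardinality arguments beyond the bijectivity of scalar multiplication.

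For the final sentence of the proposition, I would note that the assignment $c\mapsto\psi_c$ is a group homomorphism from $\F_q^{\ast}$ into $\Aut(D(q;m,n))$, since $\psi_c\circ\psi_{c'}$ sends $(x,y)$ to $(cc'x, (cc')^{m+n}y) = \psi_{cc'}(x,y)$. The image of a generator $g$ of the cyclic group $\F_q^{\ast}$ therefore generates a cyclic subgroup of $\Aut(D(q;m,n))$, and because $\psi_g$ has the same order as $g$ (the map $c\mapsto\psi_c$ is injective, as $\psi_c = \psi_{c'}$ forces $cx = c'x$ for all $x$, hence $c=c'$), this subgroup has order $q-1$.

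There is essentially no obstacle here: the computation is a single application of the multiplicativity of the monomial $x^m y^n$ under scaling, and the exponent $m+n$ on the second coordinate is precisely engineered to make the homogeneity work out. If anything warrants a sentence of care, it is only the injectivity of $c\mapsto\psi_c$ needed for the order count, which is immediate since distinct scalars act distinctly on the first coordinate. This is why the authors rightly describe the proof as trivial.
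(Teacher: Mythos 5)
Your proof is correct and is precisely the direct verification the authors had in mind when they wrote ``the proof is trivial, and we omit it'': arc preservation via the homogeneity computation $(cx_1)^m(cy_1)^n = c^{m+n}x_1^my_1^n$, bijectivity via the explicit inverse $\psi_{c^{-1}}$, and the order count via the injective homomorphism $c\mapsto\psi_c$ from $\F_q^{\ast}$ into $\Aut(D(q;m,n))$. The only blemish is the stray phrase ``involution-free bijection,'' which is meaningless as written but harmless, since bijectivity is independently established.
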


It is well known that  $\fqs$, viewed as a multiplicative group,   is a cyclic group of order $q-1$.  For any integer $n$, let
\[
A_n = \{x^n\colon x\in\fqs \},\quad
I_n = \{x\in\fqs\colon x^n = 1\}.
\]
By standard theory of cyclic groups, $|A_n| = (q-1)/\nb{n}$, and $|I_n| = \nb{n}$.
\medskip

In the following theorem we collect some independent  necessary conditions on the parameters of isomorphic monomial digraphs.

\begin{theorem}
\label{thm:bars}
{\it 
Let $D_1 = D(q; m_1,n_1)$, $D_2 = D(q; m_2,n_2)$ and $D_1\cong D_2$, where
$q$ is an odd prime power.
Then
\begin{enumerate}
\item[(i)]  $\nb{m_1} = \nb{m_2}$ and $\nb{n_1} = \nb{n_2}$.
\item[(ii)]
$
\nb{m_1+n_1}
=
\nb{m_2+n_2}
.
$
\item[(iii)]
$
\displaystyle
\nb{m_1-n_1}
=
\nb{m_2-n_2}
.
$
\end{enumerate}

Moreover,  the conditions (i) -- (iii) are independent in the sense that no two  of them imply the remaining one.
}
\end{theorem}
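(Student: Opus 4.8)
### Proof proposal for Theorem \ref{thm:bars}

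The plan is to prove the three necessary conditions separately and then settle the independence claim by exhibiting explicit counterexamples. For part (i), there is nothing new to prove: it is exactly the multiset condition of Proposition \ref{prop1}, once we observe that the two gcd's can be matched up individually rather than as an unordered pair. Indeed, Proposition \ref{prop1} gives $\{\nb{m_1},\nb{n_1}\} = \{\nb{m_2},\nb{n_2}\}$ as multisets, so a priori we might have $\nb{m_1}=\nb{n_2}$ and $\nb{n_1}=\nb{m_2}$. To pin down the ordered matching, I would invoke Theorem \ref{thm:f_and_g}: when $m_1\neq n_1$, the isomorphism has the special form $(x,y)\mapsto(f(x,y),g(Y))$ where the second coordinate depends on $Y$ alone. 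The asymmetry between the two coordinates should force $\nb{m_1}=\nb{m_2}$ directly; when $m_1=n_1$, Corollary \ref{cor1}(iv) already gives $m_2=n_2$ and $\nb{m_1}=\nb{m_2}$, so (i) holds in that case too.

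For parts (ii) and (iii) the natural strategy is to find a \emph{digraph-invariant} quantity that the automorphism structure computes in terms of $\nb{m+n}$ and $\nb{m-n}$ respectively. The quantity $\nb{m+n}$ is already flagged by Proposition \ref{proppsi}, whose automorphisms $\psi_c:(x,y)\mapsto(cx,c^{m+n}y)$ depend on $m+n$ only through its residue, hence through $\nb{m+n}$; I would look for a vertex or arc count fixed by $\psi_c$ whose value is governed by $|I_{m+n}|=\nb{m+n}$. Concretely, I expect to count loops or to count the fixed points of the order-$(q-1)$ cyclic subgroup, using $|A_n|=(q-1)/\nb{n}$ and $|I_n|=\nb{n}$. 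For (iii), recall from the proof of Theorem \ref{thm:f_and_g} that the combination $m-n$ enters precisely through the polynomial $X^{|m_1-n_1|}-a^{|m_1-n_1|}$ and its root count; this suggests that $\nb{m-n}$ controls the number of pairs of vertices joined by arcs in both directions (a ``digon'' count), or equivalently the size of the symmetric part of the adjacency relation, which is plainly isomorphism-invariant. I would make each of (ii) and (iii) precise by writing the relevant count as a sum over $\fqs$ and evaluating it via the group-theoretic sizes above.

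The main obstacle I anticipate is not any single count but choosing invariants that are \emph{clean enough} to evaluate in closed form as functions of the gcd's, and doing so uniformly in the edge cases $m=n$, $m+n\equiv 0$, and $m\equiv n$. A safer route, if a direct loop/digon count proves messy, is to reduce to normal forms: by Corollary \ref{cor1}(i) every digraph with $\nb{m}=1$ is isomorphic to some $D(q;1,n')$, and Theorem \ref{thm:f_and_g}(iii) constrains the isomorphism to $g=g(Y)$, so I can restrict attention to isomorphisms $(x,y)\mapsto(f(x,y),g(y))$ and read off constraints on $m_2\pm n_2$ by substituting into the adjacency relation \eqref{eqn:isomor_adj} and comparing the resulting exponent structure; the gcd equalities should then fall out of comparing the sizes of fibers of the induced power maps.

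Finally, for the independence statement I would produce, for each of the three conditions, a single prime power $q$ and two parameter pairs $(m_1,n_1),(m_2,n_2)$ satisfying the other two conditions but violating the chosen one. Since independence of three conditions requires checking that no two imply the third, it suffices in principle to exhibit three examples, one per condition; I would search small $q$ (well within the computer-verified range $q\le 97$) for pairs whose gcd data with $q-1$ realizes each of the three ``two-but-not-the-third'' patterns, and simply tabulate the values of $\nb{m_i}$, $\nb{n_i}$, $\nb{m_i+n_i}$, $\nb{m_i-n_i}$ to confirm. These are routine finite computations, so the independence part is the least delicate; the burden of the theorem lies in the clean derivation of (ii) and (iii).
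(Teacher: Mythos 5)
Your outline for (i) and (iii) tracks the paper's actual proof. For (i), the paper does exactly what you sketch in outline (Proposition \ref{prop1} plus breaking the multiset ambiguity, with Corollary \ref{cor1}(iv) covering $m_1=n_1$), but the step you wave at as ``the asymmetry should force it'' requires a concrete normalization that is the heart of the argument: Theorem \ref{thm:f_and_g} gives $\phi(0,0)=(0,0)$, the adjacency equation forces $\phi(1,0)=(c,0)$ with $c\in\fqs$, and composing with $\psi_{c^{-1}}$ from Proposition \ref{proppsi} yields an isomorphism fixing both $(0,0)$ and $(1,0)$; comparing the out-neighborhoods $\{(x,x^{n_1})\colon x\in\fqs\}$ and $\{(y,y^{n_2})\colon y\in\fqs\}$ of $(1,0)$ then gives $g(A_{n_1})=A_{n_2}$, hence $\nb{n_1}=\nb{n_2}$, and the multiset equality supplies the other identity. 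For (iii), your digon count is precisely the paper's invariant: it computes $c_2(q;m,n)=\frac{1}{2}q(q-1)\bigl(2+\nb{m-n}\bigr)$ by the case analysis $x_1y_1=0$, $x_1=y_1$, $x_1\neq y_1$ (using $|I_{m-n}|=\nb{m-n}$), and equality of the counts gives $\nb{m_1-n_1}=\nb{m_2-n_2}$. Your independence plan is also the paper's; it exhibits three pairs at $q=11$ ($(1,1)$ vs.\ $(1,3)$; $(1,2)$ vs.\ $(1,4)$; $(1,2)$ vs.\ $(1,10)$), though a finished proof must display such examples rather than assert that a search would find them.

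The genuine gap is in (ii). Counting loops cannot work: for odd $q$, the loops of $D(q;m,n)$ are exactly the $q$ vertices $(x,\frac{1}{2}x^{m+n})$, $x\in\fq$, so every monomial digraph has precisely $q$ loops regardless of $m,n$, and this count carries no information. Your alternative, the fixed points of the cyclic subgroup $\langle\psi_g\rangle$, is not usable as stated either, because that subgroup is not canonical: an isomorphism $D_1\cong D_2$ conjugates $\Aut(D_1)$ onto $\Aut(D_2)$ but need not carry $\langle\psi_g\rangle$ of $D_1$ to the corresponding subgroup of $D_2$, so its fixed-point data is not a priori an invariant. What the paper counts instead is the number of distinct \emph{nonzero second coordinates} of looped vertices, which equals $|A_{m+n}|=(q-1)/\nb{m+n}$; crucially, this quantity is not preserved by arbitrary bijections, and the proof leans on Theorem \ref{thm:f_and_g}(iii): the (normalized) isomorphism has second coordinate $g(y)$ depending on $y$ alone, with $g$ a permutation of $\fq$ and $g(0)=0$, so $g$ maps the second-coordinate set of the loops of $D_1$ bijectively onto that of $D_2$, giving $|A_{m_1+n_1}|=|A_{m_2+n_2}|$ and hence $\nb{m_1+n_1}=\nb{m_2+n_2}$. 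Without this structural input (or a replacement invariant), your plan for (ii) stalls, and your fallback of ``comparing fiber sizes of induced power maps'' is too vague to fill the hole.
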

\begin{proof}
For (i), by Proposition \ref{prop1},  we have   $\{\nb{m_1}, \nb{n_1}\} = \{\nb{m_2}, \nb{n_2}\}$ as multisets.  Therefore,  in order to prove both equalities in (i), it is sufficient to prove only one of them.  We will show that $\nb{n_1} = \nb{n_2}$.

Let $\phi : D_1\to D_2$ be an isomorphism.  It follows from  Theorem \ref{thm:f_and_g}  that $\phi(0,0) = (0,0)$.  As $(1,0)$ is an out-neighbor of $(0,0)$ in $D_1$,  $\phi(1,0)$ is an out-neighbor of $(0,0)$ in $D_2$, distinct from $(0,0)$. The adjacency equation  in $D_2$  implies that  $\phi(1,0) = (c,0)$, for some $c\in \fqs$. By Proposition \ref{proppsi}, composing $\phi$ with $\psi_{c^{-1}}$, we obtain an isomorphism  $\phi_1: D_1\to D_2$, such that   $(0,0)\mapsto (0,0)$ and $(1,0)\mapsto (1,0)$.  Let $f$ and $g$ be the polynomials described in Theorem \ref{thm:f_and_g}, so that $\phi_1 ((a,b)) = (f(a,b), g(b))$ for every $(a,b)\in V(D_1)$.

The out-neighbors of the vertex $(1,0)$ distinct from $(0,0)$ in $D_1$ and in $D_2$ have
the form $(x,x^{n_1})$ and $(x,x^{n_2})$, respectively, for every  $x\in\fqs$.
As $\phi_1$ maps $(0,0)$ to $(0,0)$ and $(1,0)$ to $(1,0)$,
we obtain that for every $x\in\fqs$ there exists a unique $y\in\fqs$ such that
$(f(x,x^{n_1}),g(x^{n_1})) = (y,y^{n_2})$.
As $g$ is a permutation polynomial on $\fq$, and $g(0)=0$, we obtain that $g(A_{n_1}) = A_{n_2}$,  and so $|A_{n_1}| = |A_{n_2}|$.  As $|A_{n_i}|= (q-1)/{\nb n_i}$, $i=1,2$, we obtain $\nb{n_1}=\nb{n_2}$. This ends the proof of (i).

\bigskip

 For (ii), we count the number of distinct nonzero second coordinates of the vertices of $D = D(q;m,n)$ which  have a loop on them.   As $q$ is odd, there exists a loop on  a vertex $(x,y)$ of $D$ if and only if
\[
(x,y)\to (x,y)
\Leftrightarrow
2y = x^{m+n}
\Leftrightarrow
y = \frac{1}{2}x^{m+n}
\Leftrightarrow
(x,y) = (x,\frac{1}{2}x^{m+n}).
\]
Therefore, the number of distinct nonzero second coordinates of the vertices of $D$ which  have a loop on them  is
\[ 
|A_{m+n}| = \frac{q-1}{\nb{m+n}}.
\]

Now, if $\phi\colon D_1\to D_2$ is an isomorphism, then $\phi$ maps the set of loops of
$D_1$ to the set of loops of $D_2$ bijectively. As $\phi(0, 0) =(0, 0)$,  and both $D_1$ and $D_2$ have  a loop on $(0,0)$,  an argument similar to that of part (i) 
(based on the fact that $g$ is a permutation polynomial and $g(0) = 0$) yields $|A_{m_1+n_1}|= |A_{m_2+n_2}|$. Hence,
$\nb{m_1+n_1} = \nb{m_2+n_2}$, and part (ii) is now proved.

\bigskip

For (iii), we compute  the number of 2-cycles in $D = D(q;m,n)$, which we  denote by $c_2 = c_2(q;m,n)$. 
If $(x_1,x_2)\to(y_1,y_2)\to(x_1,x_2)$ is a 2-cycle in $D$, then
\begin{equation}
\label{eqn:2cycle}
x_2+y_2 = x_1^m y_1^n = x_1^n y_1^m,\quad
(x_1,x_2) \neq (y_1,y_2).
\end{equation}
To compute $c_2$, we count the number of solutions
$(x_1,x_2,y_1,y_2)\in\fq^4$ of this system.

There are $q(q-1)$ solutions if
$x_1 = 0$ and $y_1 \neq 0$, and the same number if $x_1 \neq 0$ and
$y_1 = 0$. If $x_1 = y_1 = 0$, then $x_2 = -y_2$ can be chosen in
$q-1$ ways. Thus there are
\[
q(q-1) + q(q-1) + (q-1) = (2q+1)(q-1)
\]
solutions with $x_1y_1 = 0$.

If $x_1 y_1 \neq 0$, then (\ref{eqn:2cycle}) implies
$(x_1 y_1^{-1})^{m-n} = 1$. If $x_1 = y_1$, then choose
$x_2 \neq \frac{1}{2}x_1^{m+n}$ in $q-1$ ways, so the value of
$y_2$ is determined uniquely and is different from $x_2$. This case
yields $(q-1)^2$ solutions. If $x_1 \neq y_1$, $x_1$ can be chosen in
$q-1$ ways, and $y_1$ in $|I_{m-n}|-1 = \nb{m-n}-1$ ways, and $x_2$ in
$q$ ways. Hence, in total there are
\[
(2q+1)(q-1) + (q-1)^2 + q(q-1)(\nb{m-n}-1) = q(q-1)(2+\nb{m-n})
\]
solutions to (\ref{eqn:2cycle}). As vertices $(x_1,x_2)$ and $(y_1,y_2)$
can we swapped in this count, the number of 2-cycles is half of this:
$$
c_2(q;m,n) = \frac{1}{2}q(q-1)(2+\nb{m-n}).
$$

If $D_1$ and $D_2$ are isomorphic, they have the same number of
2-cycles, and $c_2(q;m_1,n_1) = c_2(q;m_2,n_2)$ yields
$\nb{m_1-n_1} = \nb{m_2-n_2}$, ending the proof of part~(iii).

\bigskip

We now show that conditions (i), (ii), and (iii) are independent.
Let $q = 11$. Then
$(m_1,n_1) = (1,1)$ and $(m_2,n_2) = (1,3)$ satisfy (i) and (ii), but
not (iii);
$(m_1,n_1) = (1,2)$ and $(m_2,n_2) = (1,4)$ satisfy (i) and (iii), but
not (ii);
$(m_1,n_1) = (1,2)$ and $(m_2,n_2) = (1,10)$ satisfy (ii) and (iii), but
not (i).
\end{proof}

\begin{remark}
The conditions of Theorem \ref{thm:bars} do not imply those of
Conjecture \ref{main_conj}.
For instance,
let $m_1 = m_2 = 1$, $n_1 = 4$, $n_2 = 12$ with $q = 17$. 
Then 
$\nb{m_1} = \nb{m_2} = 1$,
$\nb{n_1} = \nb{n_2} = 4$,
$\nb{m_1+n_1} = \nb{m_2+n_2} = 1$, and
$\nb{m_1-n_1} = \nb{m_2-n_2} = 1$.
The digraphs $D(17; 1,4)$ and $D(17; 1,12)$ are not isomorphic, for 
otherwise they have the same number of isomorphic copies of the digraph shown in 
Fig.~\ref{fig:looped_path}. This in turn implies, via a discussion in 
Coulter, De Winter,  Kodess, and  Lazebnik\cite{CDKL16}, 
that the trinomials 
$X^5 - 2X + 1$ and $X^{13} - 2X + 1$ have the same number of roots in $\F_{17}$. 
This however is easily seen to be false. Of course, the non-isomorphism 
of these digraphs can be also easily established by a computer. 
\end{remark}

\section{Concluding remarks}
Let $N(D,H)$ denote the number of isomorphic copies of digraph $H$ in digraph
$D$.
One can attempt to solve the isomorphism problem by finding a ``test digraph''
$H$ such that $N(D_1,H) = N(D_2,H)$ if and only if $D_1\cong D_2$.
Similarly, one can try to resolve the problem by finding a ``test family'' of
digraphs ${\mathcal H}$ satisfying $N(D_1,H) = N(D_2,H)$ for all $H\in {\mathcal H}$ if only
if $D_1\cong D_2$.
This approach was successful in the case of the aforementioned undirected
class of
graphs $G(q; m,n)$\cite{DLV05}$^,$\cite{Viglione_thesis}.  It is worth noting   that $K_{2,2}$ (same as 4-cycle) was a good ``test graph'' in that case: for fixed $m,n$ and sufficiently large $q$, the equality of numbers of 4-cycles in $G(q; m_1,n_1)$ and $G(q; m_2,n_2)$ implied the isomorphism of the graphs. In order to obtain the result for all $q$,  the number of copies of other $K_{s,t}$-subgraphs  had to be counted.
This approach however fails for monomial digraphs $D(q;m,n)$
when the ``test digraphs'' are strong directed cycles:  for every odd
prime power $q$, the digraphs
$D_1 = D(q; \frac{q-1}{2},q-1)$ and $D_2 = D(q; q-1,\frac{q-1}{2})$ are not isomorphic
by Theorem \ref{thm:bars}, but have the same number of strong directed cycles of any lengths, since
every arc ${\bf x}\to{\bf y}$ in $D_1$ corresponds to the arc ${\bf y}\to{\bf x}$ in $D_2$.
It can also be shown that conditions of Theorem \ref{thm:bars} imply
that $D(q;m_1,n_1)$ and $D(q;m_2,n_2)$ have equal number of copies isomorphic
to $\overrightarrow{K}_{2,2}$ with all arcs directed from one partition to the other, and so this digraph cannot be a ``test digraph'' either.

So far we were unable to find a good ``test family'' to replicate the success with  monomial bipartite graphs  for monomial digraphs. One difficulty is that counting $N(D,H)$  in monomial digraphs is much harder, even for small digraphs $H$.  Another difficulty was with finding good candidates for $H$, even after utilizing all necessary conditions and extensive  experiments with computer.

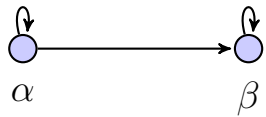
\begin{figure}
\begin{center}
\begin{tikzpicture}[->,>=stealth',shorten >=1pt,auto,node distance=3cm,thick,
every node/.style={circle,fill=blue!20,draw,font=\sffamily\Large\bfseries}]

  \node[label=below:$\alpha$] (1) {};
  \node[label=below:$\beta$] (2) [right of=1] {};

  \path[every node/.style={font=\sffamily\small}]
    (1) edge node [left] {} (2)
        edge [loop above] node {} (1)
    (2) edge [loop above] node {} ();
\end{tikzpicture}
\caption{The digraph $K$.}
\label{fig:looped_path}
\end{center}
\end{figure}

On the other hand, understanding the equality of  $N(D_1,K)= N(D_2,K)$ in monomial digraphs $D_1$ and $D_2$ for digraph $K$ of Fig.~2 led to a ``digraph-theoretic proof''  that  the numbers of solutions of certain polynomial equations over finite fields were equal,  and the latter was not clear to us at first from just algebraic considerations (see\cite{CDKL16}).
\medskip

\section*{Acknowledgement}    
The authors are thankful to the anonymous referees
whose thoughtful comments improved the paper. 
This work was partially  supported by a grant from the Simons Foundation ($\#$426092, Felix Lazebnik).

\end{document}